\newtheorem{theorem}{Theorem}
\newtheorem{lemma}{Lemma}
\newtheorem{example}{Example}
\newtheorem{proposition}{Proposition}
\title{Some Martingale Properties of Simple Random Walk\\ and Its Maximum Process}
\author{Takahiko Fujita\thanks{Department of Data Science for Business Innovation, Chuo University, Japan}
\quad
Shotaro Yagishita\thanks{Department of Industrial and Systems Engineering, Chuo University, Japan, E-mail: a15.fjng@g.chuo-u.ac.jp}
\quad
Naohiro Yoshida\thanks{Department of Management, Tokyo University of Science, Japan}}
\date{\today}
\begin{document}

\maketitle

\begin{abstract}
In this paper, martingales related to simple random walks and their maximum process are investigated.
First, a sufficient condition under which a function with three arguments, time, the random walk, and its maximum process becomes a martingale is presented, and as an application, an alternative way of deriving the Kennedy martingale is provided.
Then, a complete characterization of a function with two arguments, the random walk and its maximum, being a martingale is presented. This martingale can be regarded as a discrete version of the Az\'ema--Yor martingale.
As applications of discrete Az\'ema--Yor martingale, a proof of the Doob's inequalities is provided and a discrete Az\'ema--Yor solution for the Skorokhod embedding problem for the simple random walk is formulated and examined in detail.
\end{abstract}

\section{Introduction}
For the simple random walk, there are various known properties that can be considered as an analogy of those for the Brownian motion \citep{csaki1981excursion,hu2003lengths,csorgHo1985strong,fujita2007remarkable,fujita2014one}.
Some properties are similarly investigated for the maximum process of the random walk and the random walk itself.
For instance, \citet{csaki1983combinatorial,simons1983discrete} and \citet{fujita2008arandom} provided discrete versions of the L\'evy's theorem, i.e., they showed that the maximum process and the local time they each defined has (nearly) the same probability distribution.
In particular, \citet {simons1983discrete} and \citet{fujita2008arandom} succeeded in giving the exact equality by properly defining the local time of the random walk.
\citet{pitman1975one, miyazaki1989theorem} and \citet{tanaka1989time} proved that $2M_\bullet-Z_\bullet$ is a Markov chain, where $Z_\bullet$ is the random walk and $M_\bullet$ denotes the maximum process of the random walk precisely defined below.
This result is called the Pitman's theorem for random walks and it is the discrete--time analogue of the similar theorem for the Brownian motion, which is also called the Pitman's theorem derived in \citep{pitman1975one} as a scaling limit of the random walk's one.
On the other hand, to the best of our knowledge, there are no studies dealing with martingale properties with respect to the random walk and its maximum process, except for \citet{fujita2014discrete}, who consider pricing of a lookback option.

In this paper, we investigate martingales which are functions of time, the random walk, and its maximum, and also show their applications.
Those results can be regarded as discrete analogues of those for the Brownian motion.
We present a sufficient condition for a function of time, the random walk, and its maximum to be a martingale in the next section.
As its application, a derivation of the Kennedy martingale different from the original one by \citet{kennedy1976some} is provided.
In Section \ref{sec:A-Y-martingale}, we present a complete characterization of a function of the random walk and its maximum being a martingale.
The martingales can be regarded as a discrete version of the Az\'ema--Yor martingales \citep{azema1979une,obloj2006local,obloj2006complete}.
We show the Doob's inequalities and a discrete Az\'ema--Yor solution for the Skorokhod embedding problem for the simple random walk by using the discrete Az\'ema--Yor martingales.

In the following, we prepare the notation that is used in this paper.
The set of nonnegative integers and positive integers are denoted by $\mathbb{Z}_{\ge0}$ and $\mathbb{Z}_{>0}$, respectively.
Let $\xi_1,\xi_2,\xi_3,\dots$ be a sequence of independent and identically distributed random variables on some probability space $(\Omega,\mathcal{F},P)$ such that
\begin{align}
    P(\xi_1=1)=p,\quad P(\xi_1=-1)=q,\quad P(\xi_1=0)=r,
\end{align}
where $p>0,~q>0$ and $p+q+r=1$.
A random walk $Z_\bullet=(Z_t)_{t=0}^\infty$ is defined by
\begin{align}
    Z_0=0,\quad Z_{t+1}=Z_t+\xi_{t+1}
\end{align}
for $t\in\mathbb{Z}_{\ge0}$.
We also define the maximum process of the random walk by
\begin{align}
    M_t = \max_{0\leq s\leq t} Z_s
\end{align}
for $t\in\mathbb{Z}_{\ge0}$.
Throughout this paper, we always consider the natural filtration $\mathcal{F}_t=\sigma(Z_s : s\le t)$.
The least integer greater than or equal to $x\in\mathbb{R}$ is denoted by $\lceil x\rceil$.
The following theorem will be used later.

\begin{theorem}[Martingale representation theorem for the simple random walk $\mbox{\citep[p.69]{fujita2008random}}$]\label{thm:martingale-representation}
Let $U_\bullet=(U_t)_{t=0}^\infty$ be a stochastic process adapted to $\mathcal{F}_t$.
The process $U_\bullet$ is a $\mathcal{F}_t$-martingale if and only if there exist  $\mathcal{F}_{t-1}$-measurable random variables $f_t, g_t$ such that
\begin{align}
    U_t-U_{t-1}=f_t\{\xi_t-(p-q)\}+g_t\{\xi_t^2-(p+q)\}
\end{align}
for all $t\in\mathbb{Z}_{>0}$.
\end{theorem}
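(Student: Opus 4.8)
The plan is to establish the two implications separately. The ``if'' direction is immediate: if the displayed representation holds, then since $\xi_t$ is independent of $\mathcal{F}_{t-1}$ while $f_t,g_t$ are $\mathcal{F}_{t-1}$-measurable, I would condition on $\mathcal{F}_{t-1}$ and use $E[\xi_t]=p-q$ and $E[\xi_t^2]=p+q$ to get $E[U_t-U_{t-1}\mid\mathcal{F}_{t-1}]=f_t\cdot0+g_t\cdot0=0$ for every $t\in\mathbb{Z}_{>0}$, so $U_\bullet$ is a martingale.

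For the ``only if'' direction, fix $t\in\mathbb{Z}_{>0}$; the point is that $\xi_t$ takes only the three values $1,-1,0$. Since $\mathcal{F}_t=\sigma(\xi_1,\dots,\xi_t)$, I would write $U_t=h_t(\xi_1,\dots,\xi_t)$ and $U_{t-1}=h_{t-1}(\xi_1,\dots,\xi_{t-1})$ for Borel functions $h_t,h_{t-1}$ (Doob--Dynkin) and set
\begin{align}
a_t&:=h_t(\xi_1,\dots,\xi_{t-1},1)-h_{t-1}(\xi_1,\dots,\xi_{t-1}),\\
b_t&:=h_t(\xi_1,\dots,\xi_{t-1},-1)-h_{t-1}(\xi_1,\dots,\xi_{t-1}),\\
c_t&:=h_t(\xi_1,\dots,\xi_{t-1},0)-h_{t-1}(\xi_1,\dots,\xi_{t-1}),
\end{align}
which are $\mathcal{F}_{t-1}$-measurable and satisfy $U_t-U_{t-1}=a_t\mathbf{1}_{\{\xi_t=1\}}+b_t\mathbf{1}_{\{\xi_t=-1\}}+c_t\mathbf{1}_{\{\xi_t=0\}}$ (taking $c_t=0$ when $r=0$). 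Integrability of $a_t,b_t,c_t$ is automatic from the martingale hypothesis, since, e.g., $p\,E|a_t|=E[\,|U_t-U_{t-1}|\,\mathbf{1}_{\{\xi_t=1\}}\,]<\infty$ and $p>0$, and similarly for $b_t$ (using $q>0$).

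The heart of the argument is a $3\times3$ linear-algebra fact: the three functions $\xi\mapsto1$, $\xi\mapsto\xi-(p-q)$, $\xi\mapsto\xi^2-(p+q)$, restricted to $\xi\in\{1,-1,0\}$, form a basis of $\mathbb{R}^3$ (after an elementary row reduction their determinant is $\pm2\neq0$). Hence there exist unique $\mathcal{F}_{t-1}$-measurable $\alpha_t,f_t,g_t$ with
\begin{align}
U_t-U_{t-1}=\alpha_t+f_t\{\xi_t-(p-q)\}+g_t\{\xi_t^2-(p+q)\};
\end{align}
solving the system gives $f_t=(a_t-b_t)/2$, $g_t=(a_t+b_t)/2-c_t$, and $\alpha_t=p\,a_t+q\,b_t+r\,c_t$. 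Taking $E[\,\cdot\mid\mathcal{F}_{t-1}]$ of the last display and again using $E[\xi_t]=p-q$, $E[\xi_t^2]=p+q$ identifies $\alpha_t=E[U_t-U_{t-1}\mid\mathcal{F}_{t-1}]$, which vanishes because $U_\bullet$ is a martingale; this yields the claimed representation with these $f_t,g_t$.

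I expect the only real obstacle to be spotting the right triple of functions and verifying their independence, together with the small but crucial observation that the residual coefficient $\alpha_t$ coincides with the conditional increment $E[U_t-U_{t-1}\mid\mathcal{F}_{t-1}]$. The measurability bookkeeping via the functional representation, and the degenerate case $r=0$ (where $\xi_t^2-(p+q)\equiv0$, so $g_t$ is immaterial and one may simply take $g_t=(a_t+b_t)/2$), are routine.
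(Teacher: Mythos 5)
The paper does not prove this theorem; it is quoted from the cited reference \citep[p.69]{fujita2008random}, so there is no in-paper proof to compare against. Your argument is correct and is the standard one: since $\xi_t$ is supported on $\{1,-1,0\}$, the increment is a combination of the three indicators, and the functions $1$, $\xi-(p-q)$, $\xi^2-(p+q)$ span $\mathbb{R}^{\{1,-1,0\}}$ (determinant $2$), with the constant coefficient $\alpha_t=pa_t+qb_t+rc_t$ being exactly the conditional increment, which vanishes iff $U_\bullet$ is a martingale; your handling of the degenerate case $r=0$ and of measurability via Doob--Dynkin is also fine.
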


\section{Sufficient condition for $\{f(t,M_t-Z_t,M_t)\}_{t=0}^\infty$ to be a martingale}\label{sec:A-Y-equation}
In this section, we present a difference equation of $f(t,x,y)$ that is a sufficient condition for $f(t,M_t-Z_t,M_t)$ to be a martingale.
The difference operators $\Delta_t^-, \Delta_x^+, \Delta_x^-, \Delta_y^+$ are defined by
\begin{align}
    \Delta_t^- f(t,x,y) &= f(t,x,y)-f(t-1,x,y),\quad\Delta_x^+ f(t,x,y) = f(t,x+1,y)-f(t,x,y),\\ 
    \Delta_x^- f(t,x,y) &= f(t,x,y) - f(t,x-1,y),\quad\Delta_y^+ f(t,x,y) = f(t,x,y+1)-f(t,x,y),
\end{align}
respectively.
Then we obtain the following proposition.

\begin{proposition}\label{prop:A-Y-eq}
Suppose that $f:\mathbb{Z}^3_{\geq 0} \to \mathbb{R}$ satisfies the difference equations
\begin{align}\label{eq:A-Y-eq-1}
    (p+q)\frac{1}{2}\Delta_x^+\Delta_x^-f(t,x,y)-(p-q)\frac{\Delta_x^++\Delta_x^-}{2}f(t,x,y)+\Delta_t^-f(t,x,y)=0 
\end{align}
for $t\geq 2,x\ge 1,y\geq 0$ and 
\begin{align}\label{eq:A-Y-eq-2}
    p\Delta_y^+ f(t,0,y)+q\Delta_x^+f(t,0,y)+\Delta_t^-f(t,0,y)=0
\end{align}
for $t\geq 1, y\geq 0$.
Then, $\{f(t,M_t-Z_t,M_t)\}_{t=0}^\infty$ is a martingale.
\end{proposition}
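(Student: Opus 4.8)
The plan is to verify the martingale property directly from the definition. Set $X_t := M_t - Z_t \ge 0$ and $Y_t := M_t$, so the process in question is $f(t, X_t, Y_t)$. Adaptedness is clear, and integrability is trivial: for fixed $t$ one has $0 \le Y_t = M_t \le t$ and $0 \le X_t = Z_{s^\ast} - Z_t \le t$ (where $s^\ast \le t$ is a time at which the maximum is attained), so $f(t, X_t, Y_t)$ takes only finitely many values. It therefore remains to show $E[f(t, X_t, Y_t) \mid \mathcal{F}_{t-1}] = f(t-1, X_{t-1}, Y_{t-1})$ for every $t \ge 1$. Since $(X_{t-1}, Y_{t-1})$ is $\mathcal{F}_{t-1}$-measurable and $\xi_t$ is independent of $\mathcal{F}_{t-1}$, the whole computation reduces to tracking how $(X_t, Y_t)$ is produced from $(X_{t-1}, Y_{t-1})$ under the three outcomes $\xi_t \in \{1, -1, 0\}$ and averaging with weights $p, q, r$.

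The key step is a case split according to whether the walk sits strictly below its running maximum or touches it. If $X_{t-1} \ge 1$ (i.e.\ $Z_{t-1} < M_{t-1}$), a single $+1$ step cannot raise the maximum, so $Y_t = Y_{t-1}$ in all three cases while $X_t$ becomes $X_{t-1}-1$, $X_{t-1}+1$, $X_{t-1}$ for $\xi_t = 1, -1, 0$ respectively; hence $E[f(t, X_t, Y_t) \mid \mathcal{F}_{t-1}] = p\,f(t, X_{t-1}-1, Y_{t-1}) + q\,f(t, X_{t-1}+1, Y_{t-1}) + r\,f(t, X_{t-1}, Y_{t-1})$. If instead $X_{t-1} = 0$ (i.e.\ $Z_{t-1} = M_{t-1}$), then $\xi_t = 1$ raises the maximum and keeps the gap at zero, giving $(X_t, Y_t) = (0, Y_{t-1}+1)$, while $\xi_t = -1$ gives $(1, Y_{t-1})$ and $\xi_t = 0$ gives $(0, Y_{t-1})$; hence $E[f(t, X_t, Y_t) \mid \mathcal{F}_{t-1}] = p\,f(t, 0, Y_{t-1}+1) + q\,f(t, 1, Y_{t-1}) + r\,f(t, 0, Y_{t-1})$.

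To conclude, I would expand the difference operators in \eqref{eq:A-Y-eq-1} and \eqref{eq:A-Y-eq-2} and collect terms: \eqref{eq:A-Y-eq-1} is precisely the recurrence $p\,f(t,x-1,y) + q\,f(t,x+1,y) + r\,f(t,x,y) = f(t-1,x,y)$ for $x \ge 1$, $t \ge 2$, and \eqref{eq:A-Y-eq-2} is precisely $p\,f(t,0,y+1) + q\,f(t,1,y) + r\,f(t,0,y) = f(t-1,0,y)$ for $t \ge 1$; the coefficients $p, q, r$ reappear because $\tfrac{p+q}{2} + \tfrac{p-q}{2} = p$, $\tfrac{p+q}{2} - \tfrac{p-q}{2} = q$, and $1 - (p+q) = r$. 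Matching these two recurrences against the conditional-expectation formulas from the previous paragraph gives $E[f(t, X_t, Y_t) \mid \mathcal{F}_{t-1}] = f(t-1, X_{t-1}, Y_{t-1})$ in both cases, and the proof is complete. There is no substantial obstacle here; the only point demanding care is the bookkeeping of the transitions of $(X, Y)$ at the boundary $\{X = 0\}$, where the running maximum can jump, and one should note that requiring \eqref{eq:A-Y-eq-1} only for $t \ge 2$ is harmless because $X_0 = 0$, so the interior case $X_{t-1} \ge 1$ cannot occur at $t = 1$.
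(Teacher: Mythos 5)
Your proof is correct. The expansions are right: with $\Delta_x^+\Delta_x^- f = f(t,x+1,y)-2f(t,x,y)+f(t,x-1,y)$, equation \eqref{eq:A-Y-eq-1} collapses to $p\,f(t,x-1,y)+q\,f(t,x+1,y)+r\,f(t,x,y)=f(t-1,x,y)$ and \eqref{eq:A-Y-eq-2} to $p\,f(t,0,y+1)+q\,f(t,1,y)+r\,f(t,0,y)=f(t-1,0,y)$, and these are exactly the one-step transition averages of $(M_t-Z_t,M_t)$ in the interior and on the diagonal, respectively. Your route differs from the paper's in one respect: you verify $E[f(t,X_t,Y_t)\mid\mathcal{F}_{t-1}]=f(t-1,X_{t-1},Y_{t-1})$ directly from the definition of a martingale, whereas the paper decomposes the increment $f(t,M_t-Z_t,M_t)-f(t-1,M_{t-1}-Z_{t-1},M_{t-1})$ algebraically into $\mathcal{F}_{t-1}$-measurable multiples of $\xi_t-(p-q)$ and $\xi_t^2-(p+q)$ plus remainder terms that vanish under \eqref{eq:A-Y-eq-1}--\eqref{eq:A-Y-eq-2}, and then invokes the martingale representation theorem (Theorem \ref{thm:martingale-representation}). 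The underlying content is the same case split at $\{M_{t-1}=Z_{t-1}\}$, but your argument is more elementary and self-contained, while the paper's version additionally exhibits the predictable integrands $f_t,g_t$ explicitly, which fits the representation-theoretic framework it sets up. Your closing observation --- that requiring \eqref{eq:A-Y-eq-1} only for $t\ge2$ is harmless because $M_0-Z_0=0$ forces the $t=1$ step to fall into the boundary case governed by \eqref{eq:A-Y-eq-2} --- is a point the paper leaves implicit, and it is worth stating.
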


\begin{proof}
We can calculate for $t\ge1$ as
{\small \begin{align}
    &f(t,M_t-Z_t,M_t)-f(t-1,M_{t-1}-Z_{t-1},M_{t-1})\\
    =&\bm{1}_{\{M_{t-1}-Z_{t-1}>0\}}\bigg[\frac{-\Delta_x^+-\Delta_x^-}{2}f(t,M_{t-1}-Z_{t-1},M_{t-1})\xi_t+\frac{\Delta_x^+\Delta_x^-}{2}f(t,M_{t-1}-Z_{t-1},M_{t-1})\xi_t^2\bigg]\\
    &+\bm{1}_{\{M_{t-1}-Z_{t-1}=0\}}\bigg[\frac{\Delta_y^+-\Delta_x^+}{2}f(t,0,M_{t-1})\xi_t+\frac{\Delta_y^++\Delta_x^+}{2}f(t,0,M_{t-1})\xi_t^2\bigg]+\Delta_t^-f(t,M_{t-1}-Z_{t-1},M_{t-1})\\
    =&\bigg[\bm{1}_{\{M_{t-1}-Z_{t-1}>0\}}\frac{-\Delta_x^+-\Delta_x^-}{2}f(t,M_{t-1}-Z_{t-1},M_{t-1})+\bm{1}_{\{M_{t-1}-Z_{t-1}=0\}}\frac{\Delta_y^+-\Delta_x^+}{2}f(t,0,M_{t-1})\bigg]\{\xi_t-(p-q)\}\\
    &+\bigg[\bm{1}_{\{M_{t-1}-Z_{t-1}>0\}}\frac{\Delta_x^+\Delta_x^-}{2}f(t,M_{t-1}-Z_{t-1},M_{t-1})+\bm{1}_{\{M_{t-1}-Z_{t-1}=0\}}\frac{\Delta_y^++\Delta_x^+}{2}f(t,0,M_{t-1})\bigg]\{\xi_t^2-(p+q)\}\\
    &+\bm{1}_{\{M_{t-1}-Z_{t-1}>0\}}\bigg[(p+q)\frac{1}{2}\Delta_x^+\Delta_x^-f(t,M_{t-1}-Z_{t-1},M_{t-1})+(p-q)\frac{-\Delta_x^+-\Delta_x^-}{2}f(t,M_{t-1}-Z_{t-1},M_{t-1})\\
    &\quad+\Delta_t^-f(t,M_{t-1}-Z_{t-1},M_{t-1})\bigg]\\
    &+\bm{1}_{\{M_{t-1}-Z_{t-1}=0\}}\bigg[p\Delta_y^+f(t,0,M_{t-1})+q\Delta_x^+f(t,0,M_{t-1})+\Delta_t^-f(t,0,M_{t-1})\bigg]
\end{align}}
by simple algebraic computation.
From the assumption and Theorem \ref{thm:martingale-representation}, $\{f(t,M_t-Z_t,M_t)\}_{t=0}^\infty$ is a martingale.
\end{proof}

This is a discrete analogue of \citet[Proposition 2.1]{azema1979une} and \citet[Proposition 2]{nguyen2005some}.

\subsection{Derivation of Kennedy martingale}
By using Proposition \ref{prop:A-Y-eq}, we derive the Kennedy martingale given by \citet[Example 1]{kennedy1976some} in the different way.
Suppose that $f(t,x,y)=a^yb^th(x)$ satisfies the difference equations \eqref{eq:A-Y-eq-1}--\eqref{eq:A-Y-eq-2}, where $h:\mathbb{Z}_{\ge0}\to\mathbb{R}$ and $a, b\neq0$.
That is, it holds that
\begin{align}
    \frac{p+q}{2}a^yb^t\{h(x+1)-2h(x)+h(x-1)\}-\frac{p-q}{2}a^yb^t\{h(x+1)-h(x-1)\}+a^yb^th(x)(1-b^{-1})=0 
\end{align}
for any $t\geq 2,x\ge 1,y\geq 0$ and 
\begin{align}
    pa^yb^th(0)(a-1)+qa^yb^t\{h(1)-h(0)\}+a^yb^th(0)(1-b^{-1})=0
\end{align}
for any $t\geq 1, y\geq 0$, which are equivalent to
\begin{align}
    &qh(x+1)+(r-b^{-1})h(x)+ph(x-1)=0,\label{eq:twice-diff}\\
    &h(1)=\frac{b^{-1}-r-pa}{q}h(0)\label{eq:initial-value}
\end{align}
for any $x\ge1$.
Assuming further that $(r-b^{-1})^2-4pq>0$ and letting
\begin{align}
    \alpha_+=\frac{-(r-b^{-1})+\sqrt{(r-b^{-1})^2-4pq}}{2q},\quad\alpha_-=\frac{-(r-b^{-1})-\sqrt{(r-b^{-1})^2-4pq}}{2q},
\end{align}
the general solution of \eqref{eq:twice-diff} is $A\alpha_+^x+B\alpha_-^x$, where $A,B\in\mathbb{R}$.
Furthermore, setting $h(0)=\alpha_+^{-1}-\alpha_-^{-1}$, we obtain from the initial condition \eqref{eq:initial-value}
\begin{align}
    h(x)=(a-\alpha_-^{-1})\alpha_+^x-(a-\alpha_+^{-1})\alpha_-^x.
\end{align}
As a result, we derive the martingale $\big[a^{M_t}b^t\{(a-\alpha_-^{-1})\alpha_+^{M_t-Z_t}-(a-\alpha_+^{-1})\alpha_-^{M_t-Z_t}\}\big]_{t=0}^\infty$, which coincides with the Kennedy martingale by appropriate change of variable.
By using the martingale property, we can evaluate the probability generating function of $(Z_\tau,\tau)$,
as
\begin{align}
    E[a^{Z_\tau}b^\tau]=\frac{(\alpha_+^{-1}-\alpha_-^{-1})a^{-n}}{(a-\alpha_-^{-1})\alpha_+^n-(a-\alpha_+^{-1})\alpha_-^n}
\end{align}
for $a, b\neq0$ such that $(a-\alpha_-^{-1})\alpha_+^n-(a-\alpha_+^{-1})\alpha_-^n\neq0$ and $|r-b^{-1}|>\sqrt{4pq}$, where $\tau=\inf\{t\mid M_t-Z_t=n\}$ and $n\in\mathbb{Z}_{>0}$ (See \citet[Section 2]{kennedy1976some}).
We note that \citet[Section 3]{nguyen2005some} derived the Kennedy martingale for Brownian motion with drift from partial differential equations similar to the equations \eqref{eq:A-Y-eq-1}--\eqref{eq:A-Y-eq-2}.

\section{Discrete Az\'ema--Yor martingale}\label{sec:A-Y-martingale}
We show a necessary and sufficient condition for $\{H(Z_t,M_t)\}_{t=0}^\infty$ to be a martingale and its application in this section.

\begin{proposition}\label{prop:A-Y-martingale}
For $H:\{(x,y)\in \mathbb{Z}^2\mid \max\{x,0\}\le y\}\to\mathbb{R}$, $\{H(Z_t,M_t)\}_{t=0}^\infty$ is a martingale if and only if there exists a function $F:\mathbb{Z}_{\geq 0}\to\mathbb{R}$ such that
\begin{align}
    H(x,y)=
    \begin{cases}
        F(y)-\{F(y+1)-F(y)\}(y-x), &p=q,\\
        F(y)-\{F(y+1)-F(y)\}\frac{(q/p)^{-(y-x)}-1}{1-q/p}, &p\neq q.\\
    \end{cases}
\end{align}
\end{proposition}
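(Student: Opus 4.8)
The plan is to reduce the two-argument problem to the martingale representation theorem, Theorem~\ref{thm:martingale-representation}, and to a case analysis on whether the walk is at its running maximum. First I would apply Proposition~\ref{prop:A-Y-eq} (with $f$ independent of $t$, so $\Delta_t^- f \equiv 0$) to get the \emph{sufficiency} direction cheaply. Writing $f(x,y) = H(y-x,y)$, i.e. the first coordinate of $f$ is the ``distance below the maximum'' $M_t - Z_t$, the two difference equations \eqref{eq:A-Y-eq-1}--\eqref{eq:A-Y-eq-2} become: on the interior $x \ge 1$, a homogeneous constant-coefficient recursion in $x$ (no $y$-dependence), whose general solution is affine when $p=q$ and a combination of $1$ and $(q/p)^{x}$ when $p \ne q$; and on the boundary $x=0$, a relation linking $\Delta_y^+ f(0,y)$ to $\Delta_x^+ f(0,y)$. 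Translating back to $H$, the interior equation forces $H(x,y)$, as a function of $x$ for fixed $y$, to have exactly the stated form up to two ``constants'' $A(y), B(y)$; the boundary equation then pins down how these depend on $y$, and a relabeling $F(y) := H(y,y)$ yields the displayed formula. (One must check the formula indeed satisfies \eqref{eq:A-Y-eq-1} at $x=1$, which uses the convention that the $x=0$ slice is governed by the separate boundary equation.)

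For the \emph{necessity} direction I would argue directly from the increments. Suppose $\{H(Z_t,M_t)\}$ is a martingale. Fix a time $t$ and condition on $\mathcal{F}_{t-1}$; the increment $H(Z_t,M_t) - H(Z_{t-1},M_{t-1})$ depends only on $\xi_t$ and on whether $M_{t-1} = Z_{t-1}$. In the interior case $M_{t-1} > Z_{t-1}$, we have $M_t = M_{t-1}$ and $Z_t = Z_{t-1} + \xi_t$, so the martingale condition $E[\,\cdot \mid \mathcal{F}_{t-1}] = 0$ reads
\begin{align}
    p\,\{H(z+1,m) - H(z,m)\} + q\,\{H(z-1,m) - H(z,m)\} = 0
\end{align}
for every admissible $(z,m)$ with $m > z$; this is precisely the recursion whose solutions are affine in $z$ ($p=q$) or of the form $A(m) + B(m)(q/p)^{-(m-z)}$ ($p\ne q$). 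In the boundary case $M_{t-1} = Z_{t-1} = m$, the up-step gives $(Z_t,M_t) = (m+1,m+1)$, the down-step gives $(m-1,m)$, the null step gives $(m,m)$, so the condition becomes
\begin{align}
    p\,\{H(m+1,m+1) - H(m,m)\} + q\,\{H(m-1,m) - H(m,m)\} = 0 .
\end{align}
Setting $F(m) := H(m,m)$ and using the interior form to express $H(m-1,m)$ in terms of $F(m)$ and the slope gives a recursion relating $F(m+1) - F(m)$ to $F(m) - F(m-1)$, which one solves to confirm that $A(m), B(m)$ are exactly the coefficients appearing in the claimed closed form. A short induction on $m$, anchored at $m=0$ where the only admissible state is $(0,0)$, then shows $H$ is globally determined by $F$ and matches the formula.

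The main obstacle I expect is bookkeeping at the boundary: making sure the interior recursion and the boundary recursion are consistent, i.e.\ that the ``constants'' $A(y), B(y)$ produced slice-by-slice from the interior equation are forced by the boundary equation to glue into a single function $F$, with no over- or under-determination. Concretely one must verify that the state $(y-1, y)$ (reachable only from $(y,y)$ by a down-step, or from $(y-2,y)$ by an up-step) does not impose an extra constraint beyond what $F$ already encodes, and that every admissible $(x,y)$ is reachable so that no value of $H$ is left unconstrained. The reachability point is where $p,q>0$ is used. Once the gluing is checked, both directions follow, and the $p=q$ case is recovered from the $p\ne q$ formula by the limit $q/p \to 1$, since $\frac{(q/p)^{-k}-1}{1-q/p} \to k$.
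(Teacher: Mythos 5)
Your proposal follows essentially the same route as the paper: sufficiency via Proposition~\ref{prop:A-Y-eq} with a $t$-independent $f$, and necessity by conditioning on $\mathcal{F}_{t-1}$ to obtain the interior recursion $p\{H(x+1,y)-H(x,y)\}=q\{H(x,y)-H(x-1,y)\}$ for $x<y$ and the boundary relation $p\{F(y+1)-F(y)\}=q\{H(y,y)-H(y-1,y)\}$, then solving slice by slice (the paper does this by a telescoping sum over $k=x,\dots,y-1$). One small correction to your description: the boundary equation does not produce a recursion relating $F(m+1)-F(m)$ to $F(m)-F(m-1)$ --- $F$ must remain a free function for the stated characterization to hold --- it only ties the top difference of the slice at level $m$ to $F(m+1)-F(m)$, which together with $H(m,m)=F(m)$ pins down the two slice constants and yields the closed form.
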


\begin{proof}
The ``if'' argument is easily derived from Proposition \ref{prop:A-Y-eq}.
If $\{H(Z_t,M_t)\}_{t=0}^\infty$ is a martingale, then we have
\begin{align}
H(Z_t,M_t)&=E[H(Z_{t+1},M_{t+1})\mid \mathcal{F}_t]\\
&=E[H(Z_t+\xi_{t+1},\max\{M_t,Z_t+\xi_{t+1}\})\mid \mathcal{F}_t]\\
&=\bm{1}_{\{Z_t=M_t\}}\{pH(Z_t+1,M_t+1)+qH(Z_t-1,M_t)+rH(Z_t,M_t)\}\\
&\quad +\bm{1}_{\{Z_t<M_t\}}\{pH(Z_t+1,M_t)+qH(Z_t-1,M_t)+rH(Z_t,M_t)\}
\end{align}
for all $t\ge0$.
Noticing that for any $x,y\in\mathbb{Z}$ such that $\max\{x,0\}\le y$, there exists $t\ge0$ such that $P(Z_t=x,M_t=y)>0$, we obtain that
\begin{align}\label{eq:for-A-Y-1}
    \begin{cases}
    H(x,y)=pH(x+1,y+1)+qH(x-1,y)+rH(x,y), \quad &x=y,\\
    H(x,y)=pH(x+1,y)+qH(x-1,y)+rH(x,y), \qquad~~ &x<y
    \end{cases}
\end{align}
for any $x,y\in\mathbb{Z}$ such that $\max\{x,0\}\le y$.
Letting $F(y)= H(y,y)$, the equations \eqref{eq:for-A-Y-1} are equivalent to
\begin{align}\label{eq:for-A-Y-2}
    \begin{cases}
    p\{F(y+1)-F(y)\}=q\{H(y,y)-H(y-1,y)\}, \qquad\quad~ &x=y,\\
    p\{H(x+1,y)-H(x,y)\}=q\{H(x,y)-H(x-1,y)\}, \quad &x<y.
    \end{cases}
\end{align}

When $p=q$, we see that
\begin{align}
H(x,y)&=H(y,y)-\sum_{k=x}^{y-1}\{H(k+1,y)-H(k,y)\}\\
&=H(y,y)-\sum_{k=x}^{y-1}\{H(y,y)-H(y-1,y)\}\\
&=H(y,y)-\{H(y,y)-H(y-1,y)\}(y-x)\\
&=F(y)-\{F(y+1)-F(y)\}(y-x),
\end{align}
where the second equality follows from the second equation of \eqref{eq:for-A-Y-2}, and the fourth one from the first equation of \eqref{eq:for-A-Y-2} and the definition of $F$.
We have the desired result.

On the other hand, when $p\neq q$, we see from the equations \eqref{eq:for-A-Y-2} and the definition of $F$ that
\begin{align}
H(x,y)&=H(y,y)-\sum_{k=x}^{y-1}\{H(k+1,y)-H(k,y)\}\\
&=H(y,y)-\sum_{k=x}^{y-1}\left(\frac{p}{q}\right)^{y-1-k}\{H(y,y)-H(y-1,y)\}\\
&=H(y,y)-\{H(y,y)-H(y-1,y)\}\frac{(q/p)^{-(y-x)+1}-q/p}{1-q/p}\\
&=F(y)-\{F(y+1)-F(y)\}\frac{(q/p)^{-(y-x)}-1}{1-q/p},
\end{align}
which completes the proof.
\end{proof}

We refer to $\{H(Z_t,M_t)\}_{t=0}^\infty$ as discrete Az\'ema--Yor martingale.
For the case of $p=q$, the stochastic process $\big[F(M_t)-\{F(M_t+1)-F(M_t)\}(M_t-Z_t)\big]_{t=0}^\infty$ is a discrete version of the Az\'ema--Yor martingale \citep[Corollary 2.2]{azema1979une} for continuous local martingale, and Proposition \ref{prop:A-Y-martingale} is a discrete analogue of the complete characterization of a function of continuous local martingale and its maximum process being a local martingale \citep[Theorem 1]{obloj2006complete}.
The proof of the complete characterization in the continuous time setting is technical and not easy, but the one for the random walk is very simple.

\subsection{Doob's inequalities for simple random walk}
The following lemma is used below to derive the Doob's inequalities.

\begin{lemma}\label{lem:p-q-ineq}
Let $g_{p,q}:\mathbb{R}\to\mathbb{R}$ be the function
\begin{align}
    g_{p,q}(z)=
    \begin{cases}
        \qquad z, &p=q,\\
        \frac{(q/p)^{-z}-1}{1-q/p}, &p\neq q.\\
    \end{cases}
\end{align}
If $p\ge q$, then $g_{p,q}(z)\ge z$ holds for any $z\ge0$.
On the other hand, if $p\le q$, then $g_{p,q}(z)\le z$ holds for any $z\ge0$.
\end{lemma}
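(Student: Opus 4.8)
The plan is to reduce the claim to the elementary inequality $\ln x\le x-1$. The case $p=q$ is immediate, since there $g_{p,q}(z)=z$ and both asserted inequalities hold with equality. So assume $p\neq q$ and set $\rho:=q/p>0$ with $\rho\neq 1$, so that $g_{p,q}(z)=\dfrac{\rho^{-z}-1}{1-\rho}$. Observe first that $g_{p,q}(0)=0$, so $g_{p,q}$ and the identity function agree at $z=0$, and that $g_{p,q}$ is smooth in the real variable $z$.

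Next I would differentiate: $g_{p,q}'(z)=c\,\rho^{-z}$ with $c:=\dfrac{-\ln\rho}{1-\rho}$. The key point is that $c>0$ in \emph{both} regimes, because $\ln$ is increasing and $\ln 1=0$, so $\ln\rho$ and $\rho-1$ always have the same sign. Hence $g_{p,q}'$ is monotone on $[0,\infty)$: if $p>q$ then $\rho<1$, $\rho^{-z}$ is increasing, and $g_{p,q}$ is convex; if $p<q$ then $\rho>1$, $\rho^{-z}$ is decreasing, and $g_{p,q}$ is concave.

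Then I would invoke the tangent‑line inequality at $z=0$ together with $g_{p,q}(0)=0$. If $p>q$, convexity gives $g_{p,q}(z)\ge g_{p,q}(0)+g_{p,q}'(0)\,z=c\,z$ for $z\ge 0$; moreover $c\ge 1$ is, after multiplying by $1-\rho>0$, exactly $-\ln\rho\ge 1-\rho$, i.e.\ $\ln\rho\le\rho-1$, which is the standard bound $\ln x\le x-1$ at $x=\rho$; combining, $g_{p,q}(z)\ge c\,z\ge z$. If $p<q$, concavity gives $g_{p,q}(z)\le c\,z$; now $1-\rho<0$, so multiplying $c\le 1$ by $1-\rho$ \emph{reverses} the sign and again produces $\ln\rho\le\rho-1$, which holds; combining, $g_{p,q}(z)\le c\,z\le z$. (Equivalently, one can work with $\phi(z):=g_{p,q}(z)-z$, noting $\phi(0)=0$ and that $\phi'(z)=c\,\rho^{-z}-1$ has constant sign on $[0,\infty)$ — nonnegative when $p>q$, nonpositive when $p<q$ — so $\phi$ is monotone from $\phi(0)=0$.)

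There is no deep obstacle here; the only thing that needs care is the sign bookkeeping for $1-\rho$ and $\ln\rho$ as $\rho$ crosses $1$, and making sure the single elementary inequality $\ln x\le x-1$ is applied with the correct orientation in each of the two cases.
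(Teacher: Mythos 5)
Your proof is correct and follows essentially the same route as the paper's: both establish convexity/concavity of $g_{p,q}$, apply the tangent-line bound at $z=0$ to get $g_{p,q}(z)\gtrless \frac{-\log(q/p)}{1-q/p}\,z$, and then compare the slope to $1$ via the elementary inequality $\log x\le x-1$ (which the paper phrases as convexity of $-\log$). The sign bookkeeping you flag is handled identically in the paper.
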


The proof is not difficult, but is included in \ref{sec:proof} for completeness.
We first show the Doob's maximal inequality for the random walk.

\begin{proposition}\label{prop:Doob-maximal}
Let $\lambda$ be a positive real number.
\begin{enumerate}[(i)]
    \item If $p\ge q$ (i.e., $Z_\bullet$ is a submartingale), then the inequality
    \begin{align}\label{eq:Doob-maximal}
        \lambda P(M_t\ge\lambda)\le\lceil\lambda\rceil P(M_t\ge\lambda)\le E[\bm{1}_{\{M_t\ge\lambda\}}Z_t]
    \end{align}
    holds.
    \item If $p\le q$, then the inequality
    \begin{align}\label{eq:inverse-Doob-maximal}
        E[\bm{1}_{\{M_t\ge\lambda\}}Z_t]\le\lceil\lambda\rceil P(M_t\ge\lambda)
    \end{align}
    holds.
\end{enumerate}
Especially, the equality $\lceil\lambda\rceil P(M_t\ge\lambda)=E[\bm{1}_{\{M_t\ge\lambda\}}Z_t]$ holds if $p=q$.
\end{proposition}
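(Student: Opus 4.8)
The plan is to construct one specific discrete Az\'ema--Yor martingale via Proposition~\ref{prop:A-Y-martingale} and then read off the estimate using Lemma~\ref{lem:p-q-ineq}. First I would reduce to an integer threshold: since $\lambda>0$ and $M_t$ is $\mathbb{Z}_{\ge0}$-valued, $\{M_t\ge\lambda\}=\{M_t\ge n\}$ with $n:=\lceil\lambda\rceil\in\mathbb{Z}_{>0}$, and $\lambda\le n$ already gives $\lambda P(M_t\ge\lambda)\le\lceil\lambda\rceil P(M_t\ge\lambda)$. It then remains to compare $n P(M_t\ge n)$ with $E[\bm{1}_{\{M_t\ge n\}}Z_t]$.

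Next I would apply Proposition~\ref{prop:A-Y-martingale} with $F:\mathbb{Z}_{\ge0}\to\mathbb{R}$ given by $F(y)=\max\{y-n,0\}$, so that $F(y+1)-F(y)=\bm{1}_{\{y\ge n\}}$ and $F(0)=0$. Writing $g_{p,q}$ for the function of Lemma~\ref{lem:p-q-ineq}, which is exactly the multiplier appearing in Proposition~\ref{prop:A-Y-martingale}, this yields that
\begin{align}
    H(Z_t,M_t)=(M_t-n)^+-\bm{1}_{\{M_t\ge n\}}\,g_{p,q}(M_t-Z_t)
\end{align}
is a martingale. Since $H(Z_0,M_0)=H(0,0)=F(0)=0$ (the indicator vanishes because $n\ge1$) and $H(Z_t,M_t)$ is bounded ($|Z_t|,M_t\le t$), taking expectations gives the identity $E[(M_t-n)^+]=E[\bm{1}_{\{M_t\ge n\}}\,g_{p,q}(M_t-Z_t)]$.

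The final step invokes Lemma~\ref{lem:p-q-ineq} with the nonnegative argument $z=M_t-Z_t$: for $p\ge q$ one gets $g_{p,q}(M_t-Z_t)\ge M_t-Z_t$ pointwise, hence $E[(M_t-n)^+]\ge E[\bm{1}_{\{M_t\ge n\}}(M_t-Z_t)]$, while for $p\le q$ the inequality reverses. Expanding $E[(M_t-n)^+]=E[\bm{1}_{\{M_t\ge n\}}M_t]-nP(M_t\ge n)$ and $E[\bm{1}_{\{M_t\ge n\}}(M_t-Z_t)]=E[\bm{1}_{\{M_t\ge n\}}M_t]-E[\bm{1}_{\{M_t\ge n\}}Z_t]$ and subtracting, the $E[\bm{1}_{\{M_t\ge n\}}M_t]$ terms cancel and one is left with $E[\bm{1}_{\{M_t\ge n\}}Z_t]\ge nP(M_t\ge n)$ when $p\ge q$, i.e.\ \eqref{eq:Doob-maximal}, and $E[\bm{1}_{\{M_t\ge n\}}Z_t]\le nP(M_t\ge n)$ when $p\le q$, i.e.\ \eqref{eq:inverse-Doob-maximal}; taking both when $p=q$ gives the asserted equality.

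I expect the only genuine obstacle to be pinning down the correct test function $F$. The naive choice $F(y)=\min\{y,n\}$ also produces a martingale, of the form $H(Z_t,M_t)=Z_t\bm{1}_{\{M_t<n\}}+(\text{const})\bm{1}_{\{M_t\ge n\}}$, but it delivers the inequality in the wrong direction together with a spurious $E[Z_t]$ term; by contrast $F(y)=(y-n)^+$ is precisely the choice for which $F(0)=0$, the running-maximum part telescopes to $(M_t-n)^+$, and the defect $g_{p,q}(M_t-Z_t)-(M_t-Z_t)$ has a fixed sign on the relevant event by Lemma~\ref{lem:p-q-ineq}. Everything past that is elementary bookkeeping.
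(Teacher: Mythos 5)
Your proposal is correct and follows essentially the same route as the paper: the test function $F(y)=(y-n)^+$ with $n=\lceil\lambda\rceil$ is identical to the paper's $F(y)=\bm{1}_{\{y\ge\lceil\lambda\rceil\}}(y-\lceil\lambda\rceil)$, the resulting martingale is the paper's $U_t^F$, and the conclusion is drawn from $E[U_t^F]=0$ together with Lemma~\ref{lem:p-q-ineq} exactly as in the paper. No gaps.
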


\begin{proof}
Let $F:\mathbb{Z}_{\geq 0}\to\mathbb{R}$ be the function
\begin{align}
    F(y)=\bm{1}_{\{y\ge\lceil\lambda\rceil\}}(y-\lceil\lambda\rceil),
\end{align}
then we obtain that
\begin{align}
    F(y+1)-F(y)=\bm{1}_{\{y\ge\lceil\lambda\rceil\}}.
\end{align}
We see from Proposition \ref{prop:A-Y-martingale} that $U_\bullet^F$ defined by
\begin{align}\label{eq:martingale-for-maximal}
    \begin{split}
        U_t^F &=F(M_t)-\{F(M_t+1)-F(M_t)\}g_{p,q}(M_t-Z_t)\\ &=\bm{1}_{\{M_t\ge\lceil\lambda\rceil\}}(M_t-\lceil\lambda\rceil)-\bm{1}_{\{M_t\ge\lceil\lambda\rceil\}}g_{p,q}(M_t-Z_t)
    \end{split}
\end{align}
is a martingale with initial value of $0$, so we have
\begin{align}\label{eq:mean-for-maximal}
    E[U_t^F]=0.
\end{align}

If $p\ge q$, the first inequality of \eqref{eq:Doob-maximal} is trivial.
From the equation \eqref{eq:mean-for-maximal}, Lemma \ref{lem:p-q-ineq}, and $\{M_t\ge\lceil\lambda\rceil\}=\{M_t\ge\lambda\}$, it holds that
\begin{align}\label{eq:evaluation-maximal}
    \begin{split}
        0=E[U_t^F] &\le E[\bm{1}_{\{M_t\ge\lceil\lambda\rceil\}}(M_t-\lceil\lambda\rceil)-\bm{1}_{\{M_t\ge\lceil\lambda\rceil\}}(M_t-Z_t)]\\
        &=E[\bm{1}_{\{M_t\ge\lceil\lambda\rceil\}}Z_t]-\lceil\lambda\rceil P(M_t\ge\lceil\lambda\rceil)\\
        &=E[\bm{1}_{\{M_t\ge\lambda\}}Z_t]-\lceil\lambda\rceil P(M_t\ge\lambda).
    \end{split}
\end{align}

If $p\le q$, we have
\begin{align}\label{eq:evaluation-inverse-maximal}
    \begin{split}
        0=E[U_t^F] &\ge E[\bm{1}_{\{M_t\ge\lceil\lambda\rceil\}}(M_t-\lceil\lambda\rceil)-\bm{1}_{\{M_t\ge\lceil\lambda\rceil\}}(M_t-Z_t)]\\
        &=E[\bm{1}_{\{M_t\ge\lceil\lambda\rceil\}}Z_t]-\lceil\lambda\rceil P(M_t\ge\lceil\lambda\rceil)\\
        &=E[\bm{1}_{\{M_t\ge\lambda\}}Z_t]-\lceil\lambda\rceil P(M_t\ge\lambda)
    \end{split}
\end{align}
from the equation \eqref{eq:mean-for-maximal}, Lemma \ref{lem:p-q-ineq}, and $\{M_t\ge\lceil\lambda\rceil\}=\{M_t\ge\lambda\}$.
\end{proof}

Next, we show the Doob's $L^p$ inequality for the random walk.

\begin{proposition}\label{prop:Doob-lp}
Let $\pi>1$ and $p\ge q$ (i.e., $Z_\bullet$ is a submartingale).
It hold that
\begin{align}
    E[M_t^\pi]\le\left(\frac{\pi}{\pi-1}\right)^\pi E[|Z_t|^\pi]
\end{align}
\end{proposition}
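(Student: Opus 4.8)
The plan is to mimic the classical proof of the Doob $L^\pi$ inequality, using the discrete Doob maximal inequality from Proposition~\ref{prop:Doob-maximal}(i) as the substitute for the continuous-time ingredient. First I would write
\begin{align}
E[M_t^\pi] = \int_0^\infty \pi\lambda^{\pi-1} P(M_t\ge\lambda)\,d\lambda
\end{align}
by the layer-cake (Fubini) representation of the moment. Then, on the event $\{M_t\ge\lambda\}$ one has $Z_t\le |Z_t|$, so Proposition~\ref{prop:Doob-maximal}(i) gives $\lambda P(M_t\ge\lambda)\le E[\bm{1}_{\{M_t\ge\lambda\}}Z_t]\le E[\bm{1}_{\{M_t\ge\lambda\}}|Z_t|]$, and substituting this bound on $\lambda P(M_t\ge\lambda)$ into the integral yields
\begin{align}
E[M_t^\pi] \le \int_0^\infty \pi\lambda^{\pi-2}\, E[\bm{1}_{\{M_t\ge\lambda\}}|Z_t|]\,d\lambda = \frac{\pi}{\pi-1}\, E\!\left[|Z_t|\, M_t^{\pi-1}\right],
\end{align}
where the last equality is Fubini again together with $\int_0^{M_t}\lambda^{\pi-2}\,d\lambda = M_t^{\pi-1}/(\pi-1)$.

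Next I would apply H\"older's inequality with exponents $\pi$ and $\pi/(\pi-1)$ to the right-hand side:
\begin{align}
E\!\left[|Z_t|\, M_t^{\pi-1}\right] \le \left(E[|Z_t|^\pi]\right)^{1/\pi}\left(E[M_t^\pi]\right)^{(\pi-1)/\pi}.
\end{align}
Combining the two displays gives $E[M_t^\pi] \le \frac{\pi}{\pi-1}\left(E[|Z_t|^\pi]\right)^{1/\pi}\left(E[M_t^\pi]\right)^{(\pi-1)/\pi}$. Since $M_t$ is bounded (it is at most $t$), $E[M_t^\pi]$ is finite, so I may divide both sides by $\left(E[M_t^\pi]\right)^{(\pi-1)/\pi}$; if $E[M_t^\pi]=0$ the claimed inequality is trivial. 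Raising the resulting inequality $\left(E[M_t^\pi]\right)^{1/\pi}\le \frac{\pi}{\pi-1}\left(E[|Z_t|^\pi]\right)^{1/\pi}$ to the $\pi$-th power gives exactly $E[M_t^\pi]\le\left(\frac{\pi}{\pi-1}\right)^\pi E[|Z_t|^\pi]$.

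The main obstacle is a minor bookkeeping point rather than a deep one: justifying the interchange of integral and expectation in the layer-cake step and in the step producing $M_t^{\pi-1}/(\pi-1)$. Both are harmless because everything in sight is nonnegative (Tonelli), and all quantities are finite since $0\le M_t\le t$ and $|Z_t|\le t$ are bounded random variables, so there are no integrability subtleties. One should also note that Proposition~\ref{prop:Doob-maximal}(i) is stated for $\lambda>0$, which is exactly the range of integration, so it applies directly; the value $\lambda=0$ contributes nothing. This is the discrete analogue of the classical Doob $L^\pi$ inequality, with the discrete maximal inequality playing the role of its continuous counterpart.
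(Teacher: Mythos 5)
Your proof is correct, but it takes a genuinely different route from the paper's. You run the classical layer-cake argument: write $E[M_t^\pi]=\int_0^\infty \pi\lambda^{\pi-1}P(M_t\ge\lambda)\,d\lambda$, feed in the already-established maximal inequality of Proposition~\ref{prop:Doob-maximal}(i) (weakened to $\lambda P(M_t\ge\lambda)\le E[\bm{1}_{\{M_t\ge\lambda\}}|Z_t|]$), apply Tonelli to arrive at $E[M_t^\pi]\le\frac{\pi}{\pi-1}E[|Z_t|M_t^{\pi-1}]$, and finish with H\"older and the division step. The paper instead constructs a fresh discrete Az\'ema--Yor martingale directly from Proposition~\ref{prop:A-Y-martingale}, taking $F(y)=\sum_{k=0}^{y-1}\pi k^{\pi-1}$ so that $U_t^F=F(M_t)-\pi M_t^{\pi-1}g_{p,q}(M_t-Z_t)$; the bounds $F(y)\le y^\pi$ and $g_{p,q}(z)\ge z$ (Lemma~\ref{lem:p-q-ineq}) turn $E[U_t^F]=0$ into $(\pi-1)E[M_t^\pi]\le\pi E[M_t^{\pi-1}Z_t]$ in one line, after which the two proofs coincide (H\"older, then divide by $(E[M_t^\pi])^{(\pi-1)/\pi}$, which you justify more carefully than the paper does, noting finiteness via $M_t\le t$ and the trivial case $E[M_t^\pi]=0$). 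Your route is more modular --- it uses the maximal inequality as a black box and would apply verbatim to any nonnegative process satisfying it --- at the cost of the Fubini/layer-cake bookkeeping; the paper's route is a deliberate second showcase of the discrete Az\'ema--Yor machinery and reaches the key intermediate inequality without any integration over $\lambda$. Both are sound.
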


\begin{proof}
Let $F:\mathbb{Z}_{\geq 0}\to\mathbb{R}$ be the function
\begin{align}
    F(0)=0,\quad F(y+1)=F(y)+\pi y^{\pi-1},
\end{align}
then we see that
\begin{align}\label{eq:power-p}
    F(y)=\sum_{k=0}^{y-1}\pi k^{\pi-1} \le\sum_{k=0}^{y-1}\int_k^{k+1} \pi t^{\pi-1} dt=\int_0^y \pi t^{\pi-1} dt=y^\pi.
\end{align}
Let
\begin{align}\label{eq:martingale-for-lp}
    \begin{split}
        U_t^F &=F(M_t)-\{F(M_t+1)-F(M_t)\}g_{p,q}(M_t-Z_t)\\
        &=F(M_t)-\pi M_t^{\pi-1}g_{p,q}(M_t-Z_t),
    \end{split}
\end{align}
then from Proposition \ref{prop:A-Y-martingale}, $U_\bullet^F$ is a martingale with initial value of $0$.
Since the inequality $U_t^F\le M_t^\pi-\pi M_t^{\pi-1}(M_t-Z_t)$ holds by Lemma \ref{lem:p-q-ineq} and the inequality \eqref{eq:power-p}, we have
\begin{align}
    0\le E[M_t^\pi-\pi M_t^{\pi-1}(M_t-Z_t)]=(1-\pi)E[M_t^\pi]+\pi E[M_t^{\pi-1}Z_t].
\end{align}
By using the H\"older's inequality, we have the desired result.
\end{proof}

\citet[Section 3]{obloj2006local} derived the Doob's inequalities for general discrete--time submartingales in a manner similar to proofs of Propositions \ref{prop:Doob-maximal} and \ref{prop:Doob-lp} with $p=q$ by using their discrete balayage formula.
On the other hand, Proposition \ref{prop:Doob-maximal} leads to the maximal ``equality'' for symmetric random walk.

\subsection{Discrete Az\'ema--Yor solution for Skorokhod embedding problem}
In the following, we present a discrete Az\'ema--Yor solution for the Skorokhod embedding problem for the simple random walk by using the discrete Az\'ema--Yor martingales.
The Skorokhod embedding problem for standard Brownian motion \citep{skorokhod1965studies} says, ``For given a centered probability measure $\mu$ on $\mathbb{R}$, find a stopping time $T$ such that the stopped process $(B_{t\wedge T})_{t=0}^\infty$ is uniformly integrable and $T$ embeds $\mu$ in $B_\bullet$, i.e., $B_T\sim\mu$,'' where $(B_t)_{t=0}^\infty$ is the Brownian motion.
Unlike the case of the Brownian motion, there exists a centered probability measure on $\mathbb{Z}$ that can not be embedded such that the stopped process is uniformly integrable in the case of the simple symmetric random walk \citep[Proposition 1]{cox2008classes}.
The following proposition shows that some measures are embedded in the symmetric random walk.

\begin{proposition}\label{prop:A-Y-solution}
Let $p=q$ and $\mu$ be a centered probability measure on $\mathbb{Z}$.
We define a function $\psi_\mu :\mathbb{Z}\to \mathbb{R}$ by
\begin{align}
    \psi_\mu(x)=
    \begin{cases}
        x+\frac{\mu(\{x+1,x+2,\dots\})}{\mu(\{x\})}, &x\in\mathrm{supp}\,\mu,\\
        \qquad-1, &x \not\in \mathrm{supp}\,\mu,
    \end{cases}
\end{align}
where $\mathrm{supp}\,\mu=\{x\in\mathbb{Z}\mid\mu(\{x\})\not=0\}$.
Suppose further that the following assumptions hold:
\begin{enumerate}[({A}1)]
    \item It holds that $\psi_\mu(\mathrm{supp}\,\mu)\subset\mathbb{Z}_{\ge0}$;
    \item The function ${\psi_\mu}_{|\mathrm{supp}\,\mu}$ is strictly increasing.
\end{enumerate}
Let $T_\mu=\inf\{t\mid M_t=\psi_\mu(Z_t)\}$.
Then, $E[T_\mu]<\infty$, $(Z_{t\wedge T_\mu})_{t=0}^\infty$ is uniformly integrable, and the stopping time $T_\mu$ embeds $\mu$ in $Z_\bullet$, that is, $Z_{T_\mu}\sim\mu$ holds.
\end{proposition}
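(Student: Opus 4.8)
The plan is to verify the Azéma--Yor construction for the random walk in three stages: first exhibit the martingale that drives the embedding, then use it to identify the law of $Z_{T_\mu}$, and finally establish the integrability claims. For the first stage, I would choose $F:\mathbb{Z}_{\ge0}\to\mathbb{R}$ so that the associated discrete Azéma--Yor martingale $U_t^F = F(M_t) - \{F(M_t+1)-F(M_t)\}(M_t - Z_t)$ from Proposition~\ref{prop:A-Y-martingale} (in the $p=q$ case) has the property that, at time $T_\mu$, it pins down $Z_{T_\mu}$. The natural choice is to make $\{F(y+1)-F(y)\}$ essentially the generalized inverse of $\psi_\mu$: since $\psi_\mu$ is strictly increasing on $\mathrm{supp}\,\mu$ by (A2), for each attainable level $y = \psi_\mu(x)$ we want the slope of $F$ at $y$ to be related to $\mu(\{x,x+1,\dots\})$, so that the martingale identity $E[U_t^F]=0$ becomes the barycenter relation defining a centered law. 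Concretely I expect to set, for $y\ge 0$, $F(y+1)-F(y) = \mu(\{\psi_\mu^{-1}(y),\psi_\mu^{-1}(y)+1,\dots\})$ when $y\in\psi_\mu(\mathrm{supp}\,\mu)$ and interpolate appropriately otherwise, mirroring the classical Azéma--Yor barycenter function $b_\mu$.

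For the second stage, the key observation is that on $\{T_\mu < \infty\}$ we have $M_{T_\mu} = \psi_\mu(Z_{T_\mu})$ by definition of $T_\mu$, and the definition of $\psi_\mu$ together with strict monotonicity (A2) means that the pair $(Z_{T_\mu}, M_{T_\mu})$ is determined by $Z_{T_\mu}$ alone and ranges exactly over $\{(x,\psi_\mu(x)) : x\in\mathrm{supp}\,\mu\}$. I would first argue $T_\mu<\infty$ almost surely: because the symmetric walk is recurrent (here $p=q>0$), $M_t\to\infty$ while $Z_t$ oscillates, so $M_t - \psi_\mu(Z_t)$ must eventually hit $0$; one has to be a little careful because $\psi_\mu$ takes the value $-1$ off the support, but $M_t\ge 0$ always and the walk visits every integer in $\mathrm{supp}\,\mu$ infinitely often, so the stopping condition is met. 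Then I would apply optional stopping to the bounded-increment martingale $U^F_{t\wedge T_\mu}$. To pass to the limit I need $U^F_{t\wedge T_\mu}$ to be uniformly integrable; granting that, $E[U^F_{T_\mu}] = U_0^F = F(0)$, and by plugging in $M_{T_\mu}=\psi_\mu(Z_{T_\mu})$ and the chosen $F$, this identity should collapse (after summation by parts / telescoping over the support) precisely to $\sum_x x\,\mu(\{x\}) = 0$ being consistent, while testing against a family of such $F$'s (or equivalently reading off the distribution of $M_{T_\mu}$, which by construction is the push-forward of $\mu$ under $\psi_\mu$) forces $Z_{T_\mu}\sim\mu$. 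The cleanest route is probably: show $P(M_{T_\mu}\ge \psi_\mu(x)) = \mu(\{x,x+1,\dots\})$ for each $x\in\mathrm{supp}\,\mu$ using the martingale $U^F$ with $F$ a suitable ramp (as in the proof of Proposition~\ref{prop:Doob-maximal}), and then invert the bijection $\psi_\mu$ to get $P(Z_{T_\mu}\ge x)=\mu(\{x,x+1,\dots\})$, i.e.\ $Z_{T_\mu}\sim\mu$.

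For the third stage, $E[T_\mu]<\infty$: since $(Z_t^2 - (p+q)t)_{t\ge 0} = (Z_t^2 - (1-r)t)$ is a martingale, optional stopping at $T_\mu\wedge t$ gives $(1-r)E[T_\mu\wedge t] = E[Z_{T_\mu\wedge t}^2] \le E[M_{T_\mu}^2] = E[\psi_\mu(Z_{T_\mu})^2] = \sum_{x}\psi_\mu(x)^2\,\mu(\{x\})$, which is finite provided $\mu$ has the requisite moment; here I would note that (A1)--(A2) plus $\mu$ centered actually force $\psi_\mu$ to have a finite second moment against $\mu$ (this is where one may need a short lemma, e.g.\ $\psi_\mu(x)\mu(\{x\}) = x\mu(\{x\}) + \mu(\{x+1,\dots\})$ and a summability estimate from centeredness), so $E[T_\mu\wedge t]$ is bounded uniformly in $t$ and monotone convergence yields $E[T_\mu]<\infty$. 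Uniform integrability of $(Z_{t\wedge T_\mu})$ then follows because $|Z_{t\wedge T_\mu}|\le M_{T_\mu}$ and $M_{T_\mu}=\psi_\mu(Z_{T_\mu})$ is integrable (indeed square-integrable), giving an $L^1$ (in fact $L^2$) dominating random variable; this domination also retroactively justifies the uniform integrability of $U^F_{t\wedge T_\mu}$ used in stage two, since $|U^F_{t\wedge T_\mu}|$ is controlled by $F(M_{T_\mu})$ plus a slope times $M_{T_\mu}-Z_{t\wedge T_\mu}$, both dominated by functions of $M_{T_\mu}$.

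The main obstacle I anticipate is stage two, specifically getting the optional stopping / limit exchange clean: one must identify the right $F$ (or family of $F$'s) so that the single scalar identity $E[U^F_{T_\mu}]=F(0)$ genuinely recovers the full distribution of $Z_{T_\mu}$ rather than just one moment, and one must rule out defects of the embedding coming from the values of $\psi_\mu$ off $\mathrm{supp}\,\mu$ (the $-1$ convention) and from levels $y\ge 0$ not of the form $\psi_\mu(x)$. Handling this requires checking that the jump structure of the walk never lets $M_t$ overshoot a target level without $(Z_t,M_t)$ landing exactly on the curve $y=\psi_\mu(x)$ at a support point --- this is the discrete analogue of the continuity used in the classical Azéma--Yor argument, and it is exactly where assumptions (A1) and (A2) are doing their work, so the verification that those two hypotheses suffice is the crux. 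A secondary, more technical nuisance is the integrability bookkeeping in stage three when $\mathrm{supp}\,\mu$ is infinite, but that is routine once the structural identity for $\psi_\mu$ is in hand.
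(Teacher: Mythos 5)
Your plan for identifying the law of $Z_{T_\mu}$ (a family of discrete Az\'ema--Yor martingales indexed by the support points, optional stopping, then inversion of $\psi_\mu$) is essentially the paper's argument, and that part would go through. But there are two genuine gaps. First, the entire proof hinges on structural facts about $\psi_\mu$ that you never derive and that the paper extracts from centeredness at the outset: writing $\mathrm{supp}\,\mu=\{x_0<x_1<\cdots\}$, one shows $\sum_i\psi_\mu(x_i)\mu(\{x_i\})=\sum_i i\,\mu(\{x_i\})$, which combined with (A1)--(A2) forces $\psi_\mu(x_i)=i$ exactly, and hence that $\psi_\mu(x)-x$ is nonnegative, nonincreasing, and bounded by $C:=\psi_\mu(x_0)-x_0$. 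Without this, your stage-two finiteness argument is a non sequitur: recurrence gives that the walk visits each $x\in\mathrm{supp}\,\mu$ infinitely often, but the stopping condition $M_t=\psi_\mu(Z_t)$ is a \emph{simultaneous} condition, and since $M_t\to\infty$ there is only a finite time window during which $M_t=\psi_\mu(x)$; nothing in your argument guarantees the walk is at $x$ during that window. The paper closes this by exhibiting an explicit dominating stopping time $T_1$ (the first time the walk completes a run of $C$ consecutive down-steps), proving $E[T_1]<\infty$ directly and $T_\mu\le T_1$ by a case analysis using $\psi_\mu(x_i)=i$ and the bound $C$; this simultaneously yields $E[T_\mu]<\infty$ and (via $|Z_{t\wedge T_\mu}|\le T_\mu$) uniform integrability, with no moment assumptions on $\mu$.

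Second, your stage three is circular and rests on an unjustified inequality. You bound $E[Z_{T_\mu\wedge t}^2]\le E[M_{T_\mu}^2]$, but the running maximum controls only the upside of $Z$; this inequality requires knowing that $Z$ cannot drift far below $x_0$ before $T_\mu$, which is again the structural fact above. Worse, you evaluate $E[M_{T_\mu}^2]=\sum_x\psi_\mu(x)^2\mu(\{x\})$ using $Z_{T_\mu}\sim\mu$, which is the conclusion of stage two --- and stage two's passage to the limit in optional stopping is, by your own account, ``retroactively justified'' by stage three. The paper avoids this loop by proving $E[T_\mu]<\infty$ first, from the combinatorial bound $T_\mu\le T_1$, and then using $1+T_\mu$ as the dominating variable in the dominated convergence step of the distributional identification. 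You correctly flag that verifying (A1)--(A2) suffice is ``the crux,'' but that crux is precisely the part of the proof that is missing.
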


\begin{proof}
We will show $E[T_\mu]<\infty$ and the uniformly integrability of $(Z_{t\wedge T_\mu})_{t=0}^\infty$ at first.
Since $\mathrm{supp}\,\mu$ is bounded below from the assumptions (A1) and (A2), $\mathrm{supp}\,\mu$ can be represented as $\{x_0,x_1,\ldots\}$ or $\{x_0,x_1,\ldots,x_m\}$ by a strictly increasing sequence $x_0<x_1<\cdots$ or $x_0<x_1<\cdots<x_m$.
It holds that $\psi_\mu(x)\mu(\{x\})=x\mu(\{x\})+\mu(\{x+1,x+2,\dots\})$ for all $x\in\mathrm{supp}\,\mu$ by the definition of $\psi_\mu$, so we have
\begin{align}
    \sum_{i\ge0}\psi_\mu(x_i)\mu(\{x_i\}) &=\sum_{i\ge0}x_i\mu(\{x_i\})+\sum_{i\ge0}\mu(\{x_i+1,x_i+2,\dots\})\\
    &=\sum_{i\ge0}\sum_{j>i}\mu(\{x_j\})\\
    &=\sum_{j\ge0}\sum_{i=0}^{j-1}\mu(\{x_j\})\\
    &=\sum_{j\ge0}j\mu(\{x_j\})\\
    &=\sum_{i\ge0}i\mu(\{x_i\}),
\end{align}
where the second equality follows from $\sum_{i\ge0}x_i\mu(\{x_i\})=0$.
Noting that $\sum_{i\ge0}\mu(\{x_i+1,x_i+2,\dots\})<\infty$ and it holds that $\psi_\mu(x_i)\ge i$ from the assumptions (A1) and (A2), we obtain that
\begin{align}\label{eq:psi-value}
    \psi_\mu(x_i)=i.
\end{align}
The function $\psi_\mu(x)-x$ is nonnegative on $\mathrm{supp}\,\mu$ by the definition of $\psi_\mu$ and is decreasing on $\mathrm{supp}\,\mu$ because we can evaluate as
\begin{align}
    \psi_\mu(x_{i+1})-x_{i+1}-\{\psi_\mu(x_i)-x_i\}=1+x_i-x_{i+1}\le1-1=0.
\end{align}
In particular, $C\coloneqq\psi_\mu(x_0)-x_0\ge\psi_\mu(x)-x$ for all $x\in\mathrm{supp}\,\mu$.
Let $T_1=\inf\{t\ge C\mid Z_t-Z_{t-C}=-C\}$ and $T_2=|\{t\mid t\le T_1,~ Z_t-Z_{t-1}\in\{0,1\}\}|$, where the cardinality of a set $A$ is denoted by $|A|$.
We have $T_1\le (C-1)T_2+C+T_2=C(T_2+1)$ because $\sup\{c\mid t\le T_1,~ Z_t-Z_{t-1}\in\{0,1\},~ Z_{t-1}-Z_{t-1-c}=-c\}\le C-1$.
Since $T_2$ can be regarded as the number of failures before the first success in independent Bernoulli trials with probability of success $(p+r)^C$, it holds that $E[T_2]<\infty$, which implies that $T_1<\infty~a.s.$
To show $T_\mu\le T_1~a.s.$, we divide the case into (i) $\mathrm{supp}\,\mu$ is finite. i.e., $\mathrm{supp}\,\mu=\{x_0,x_1,\ldots,x_m\}$ and $M_{T_1}(\omega)>m$, and (ii) otherwise.

\noindent Case (i):
Note that there exists $s\le T_1(\omega)$ such that $M_s(\omega)=Z_s(\omega)=m$.
The equality $m=\psi_\mu(x_m)=x_m$ holds by the definition of $\psi_\mu$ and the equality \eqref{eq:psi-value}, so $M_s(\omega)=m=\psi_\mu(m)=\psi_\mu(Z_s(\omega))$, which implies that $T_\mu(\omega)\le s\le T_1(\omega)$.

\noindent Case (ii):
In this case, there exists $x\in\mathrm{supp}\,\mu$ such that $\psi_\mu(x)=M_{T_1}(\omega)$.
Since we can evaluate as
\begin{align}
    Z_{T_1}(\omega)\le M_{T_1}(\omega)-C\le M_{T_1}(\omega)-\psi_\mu(x)+x=x\le\psi_\mu(x)=M_{T_1}(\omega),
\end{align}
there exists $s\le T_1(\omega)$ such that $M_s(\omega)=M_{T_1}(\omega)$ and $Z_s(\omega)=x$, so $M_s(\omega)=\psi_\mu(x)=\psi_\mu(Z_s(\omega))$, which implies that $T_\mu(\omega)\le s\le T_1(\omega)$.

\noindent As a result, $T_\mu\le T_1<\infty~a.s.$ and hence $E[T_\mu]<\infty$.
Since $|Z_{t\wedge T_\mu}|\le t\wedge T_\mu\le T_\mu$, $(Z_{t\wedge T_\mu})_{t=0}^\infty$ is uniformly integrable by Theorem 27.2 in \cite{jacod2004probability}.


Next, we show $Z_{T_\mu}\sim\mu$.
For any $x\in\mathrm{supp}\,\mu$, let $F_x:\mathbb{Z}_{\geq 0}\to\mathbb{R}$ be the function
\begin{align}
    F_x(y)=\bm{1}_{\{y>\psi_\mu(x)\}}
\end{align}
then we see from Proposition \ref{prop:A-Y-martingale} that $U_\bullet^x$ defined by
\begin{align}\label{eq:martingale-for-skorokhod}
    \begin{split}
        U_t^x &=F_x(M_t)-\{F_x(M_t+1)-F_x(M_t)\}(M_t-Z_t)\\ &=\bm{1}_{\{M_t>\psi_\mu(x)\}}-\bm{1}_{\{M_t=\psi_\mu(x)\}}(M_t-Z_t)
    \end{split}
\end{align}
is a martingale with initial value of $0$ and $(U_{t\wedge T_\mu}^x)_{t=0}^\infty$ is also a martingale by the optional stopping theorem.
Since
\begin{align}
    \left|U_{t\wedge T_\mu}^x\right|\le\bm{1}_{\{M_{t\wedge T_\mu}>\psi_\mu(x)\}}+\bm{1}_{\{M_{t\wedge T_\mu}=\psi_\mu(x)\}}(M_{t\wedge T_\mu}-Z_{t\wedge T_\mu})\le1+t\wedge T_\mu\le1+T_\mu
\end{align}
we have
\begin{align}\label{eq:after-limit}
    E[\bm{1}_{\{M_{T_\mu}>\psi_\mu(x)\}}]=E[\bm{1}_{\{M_{T_\mu}=\psi_\mu(x)\}}(M_{T_\mu}-Z_{T_\mu})]
\end{align}
by the dominated convergence theorem.
Since $M_{T_\mu}=\psi_\mu (Z_{T_\mu})~a.s.$, we have
\begin{align}\label{eq:diffecence-on-supp}
    P(Z_{T_\mu}>x)=(\psi_\mu (x)-x)P(Z_{T_\mu}=x)
\end{align}
from the equality \eqref{eq:after-limit} and the assumption (A2).
Note that $Z_{T_\mu}\in\mathrm{supp}\,\mu~a.s.$ holds because $M_{T_\mu}=\psi_\mu (Z_{T_\mu})~a.s.$
We see from the equality \eqref{eq:diffecence-on-supp} and the definition of $\psi_\mu$ that
\begin{align}
    \frac{\mu(\{x_0+1,x_0+2,\dots\})}{\mu(\{x_0\})}P(Z_{T_\mu}=x_0)=P(Z_{T_\mu}>x_0)=1-P(Z_{T_\mu}=x_0),
\end{align}
that is,
\begin{align}
    P(Z_{T_\mu}=x_0)=\frac{1}{1+\frac{\mu(\{x_0+1,x_0+2,\dots\})}{\mu(\{x_0\})}}=\mu(\{x_0\}).
\end{align}
Suppose that $P(Z_{T_\mu}=x_j)=\mu(\{x_j\})$ for any $j$ such that $0\le j\le i-1$,
 then we have
\begin{align}
    \frac{\mu(\{x_i+1,x_i+2,\dots\})}{\mu(\{x_i\})}P(Z_{T_\mu}=x_i) &=P(Z_{T_\mu}>x_i)\\
    &=1-P(Z_{T_\mu}=x_i)-P(Z_{T_\mu}<x_i)\\
    &=1-P(Z_{T_\mu}=x_i)-\mu(\{\dots,x_i-2,x_i-1\})\\
    &=\mu(\{x_i,x_i+1,\dots\})-P(Z_{T_\mu}=x_i),
\end{align}
so the equality $P(Z_{T_\mu}=x_i)=\mu(\{x_i\})$ holds.
As a result, we obtain that $P(Z_{T_\mu}=x)=\mu(\{x\})$ for all $x\in\mathrm{supp}\,\mu$, which is the desired result.
\end{proof}

For the simple random walk with $p=q=\frac{1}{2}$, \citet{obloj2004skorokhod} and \citet{cox2008classes} studied a solution using the barycenter function directly as an analogue of Az\'ema--Yor solution in continuous--time setting, but $\psi_\mu$ in our discrete Az\'ema--Yor solution is a different function.
To the best of our knowledge, the integrability of the Az\'ema--Yor stopping time using the barycenter function has not been appeared in both continuous--time and discrete--time settings.
In the following, we present centered probability measures embedded in $Z_\bullet$ by our discrete Az\'ema--Yor stopping time.

\begin{example}[Centered geometric distribution]\label{ex:geometric}
Let $n\in\mathbb{Z}_{>0}$ and $\pi=\frac{1}{1+n}$.
We define a centered probability measure $\mu$ by
\begin{align}
    \mu(\{x\})=\pi(1-\pi)^{x+n}
\end{align}
for $x\ge-n$.
Then, we obtain that
\begin{align}
    \psi_\mu(x)=x+\frac{\mu(\{x+1,x+2,\dots\})}{\mu(\{x\})}=x+\frac{\pi(1-\pi)^{x+1+n}}{\pi(1-\pi)^{x+n}}=x+\frac{1-\pi}{\pi}=x+n,
\end{align}
so $T_\mu=\inf\{t\mid M_t=\psi_\mu(Z_t)\}=\inf\{t\mid M_t=Z_t+n\}$ embeds $\mu$ in $Z_\bullet$ from Proposition \ref{prop:A-Y-solution}.
\end{example}

\begin{example}[Discrete uniform distribution with interval $2$]\label{ex:uniform}
Let $n\in\mathbb{Z}_{>0}$.
We define a centered probability measure $\mu$ by 
\begin{align}
    \mu(\{x\})=\frac{1}{2n+1}
\end{align}
for $x\in\{-2n,-2n+2,\ldots,-2,0,2,\ldots,2n-2,2n\}$.
Then, we obtain that
\begin{align}
    \psi_\mu(x)=x+\frac{\mu(\{x+1,x+2,\dots\})}{\mu(\{x\})}=x+2n-i=x+2n-n-\frac{x}{2}=\frac{1}{2}x+n
\end{align}
for $x=-2n+2i,~ i=0,1,\ldots,2n-1,2n$, so $T_\mu=\inf\{t\mid M_t=\psi_\mu(Z_t)\}=\inf\{t\mid M_t=\frac{1}{2}Z_t+n\}$ embeds $\mu$ in $Z_\bullet$ from Proposition \ref{prop:A-Y-solution}.
\end{example}

\section{Conclusion}\label{sec:conclusion}
We have presented the martingales associated with the simple random walk and its maximum process, and their applications.
Interestingly, the stopping time in which the barycenter function is used \citep{obloj2004skorokhod,cox2008classes} and our discrete Az\'ema--Yor stopping time in which $\psi_\mu$ is used coincide in examples \ref{ex:geometric} and \ref{ex:uniform}, even though they are different functions in general.
The relationship between the sets of measures that can be embedded by each solution is interesting but not yet known.

\appendix
\def\thesection{Appendix \Alph{section}}

\section{Proof of Lemma \ref{lem:p-q-ineq}}\label{sec:proof}
\begin{proof}
Since the case $p=q$ is obvious, we first consider the case $p>q$.
In this case, the function $g_{p,q}$ is convex, so
\begin{align}\label{eq:convexity-g}
    g_{p,q}(z)\ge g_{p,q}(0)+g_{p,q}'(0)z=\frac{-\log(q/p)}{1-q/p}z
\end{align}
holds for any $z\in\mathbb{R}$.
From the convexity of $-\log(\cdot)$, we have
\begin{align}\label{eq:convexity--log}
    -\log(q/p)\ge-\log1-(q/p-1)=1-q/p,
\end{align}
so $\frac{-\log(q/p)}{1-q/p}\ge1$ holds from the inequality \eqref{eq:convexity--log} and $1-q/p>0$.
From this and the inequality \eqref{eq:convexity-g}, we see that
\begin{align}
    g_{p,q}(z)\ge\frac{-\log(q/p)}{1-q/p}z\ge z
\end{align}
for any $z\ge0$.

Next, we consider the case $p<q$, that is, $1-q/p<0$, so we have $\frac{-\log(q/p)}{1-q/p}\le1$ from the inequality \eqref{eq:convexity--log}.
In this case, the function $g_{p,q}$ is concave, so
\begin{align}\label{eq:concavity-g}
    g_{p,q}(z)\le g_{p,q}(0)+g_{p,q}'(0)z=\frac{-\log(q/p)}{1-q/p}z\le z
\end{align}
holds for any $z\ge0$.
\end{proof}

\bibliography{ref.bib}

\begin{thebibliography}{22}
\providecommand{\natexlab}[1]{#1}
\providecommand{\url}[1]{\texttt{#1}}
\expandafter\ifx\csname urlstyle\endcsname\relax
  \providecommand{\doi}[1]{doi: #1}\else
  \providecommand{\doi}{doi: \begingroup \urlstyle{rm}\Url}\fi

\bibitem[Az\'ema and Yor(1979)]{azema1979une}
Jacques Az\'ema and Marc Yor.
\newblock Une solution simple au probl\`eme de {Skorokhod}.
\newblock In \emph{S{\'e}minaire de Probabilit{\'e}s XIII}, pages 90--115.
  Springer, 1979.

\bibitem[Cox and Ob{\l}{\'o}j(2008)]{cox2008classes}
Alexander Cox and Jan Ob{\l}{\'o}j.
\newblock Classes of measures which can be embedded in the simple symmetric
  random walk.
\newblock \emph{Electronic Journal of Probability}, 13:\penalty0 1203--1228,
  2008.

\bibitem[Cs{\'a}ki and Mohanty(1981)]{csaki1981excursion}
Endre Cs{\'a}ki and Sri~G Mohanty.
\newblock Excursion and meander in random walk.
\newblock \emph{Canadian Journal of Statistics}, 9\penalty0 (1):\penalty0
  57--70, 1981.

\bibitem[Cs{\'a}ki and R{\'e}v{\'e}sz(1983)]{csaki1983combinatorial}
Endre Cs{\'a}ki and P{\'a}l R{\'e}v{\'e}sz.
\newblock A combinatorial proof of a theorem of {P. L{\'e}vy} on the local
  time.
\newblock \emph{Acta Sci. Math.}, 45:\penalty0 119--129, 1983.

\bibitem[Cs{\"o}rg{\H{o}} and R{\'e}v{\'e}sz(1985)]{csorgHo1985strong}
Mikl{\'o}s Cs{\"o}rg{\H{o}} and P{\'a}l R{\'e}v{\'e}sz.
\newblock On strong invariance for local time of partial sums.
\newblock \emph{Stochastic Processes and their Applications}, 20\penalty0
  (1):\penalty0 59--84, 1985.

\bibitem[Fujita(2008{\natexlab{a}})]{fujita2008arandom}
Takahiko Fujita.
\newblock A random walk analogue of {L{\'e}vy’s} theorem.
\newblock \emph{Studia scientiarum mathematicarum Hungarica}, 45\penalty0
  (2):\penalty0 223--233, 2008{\natexlab{a}}.

\bibitem[Fujita(2008{\natexlab{b}})]{fujita2008random}
Takahiko Fujita.
\newblock \emph{Random Walks and Stochastic Calculus (in Japanese)}.
\newblock Nippon Hyoron Sha, 2008{\natexlab{b}}.

\bibitem[Fujita and Suga(2014)]{fujita2014discrete}
Takahiko Fujita and Mizuki Suga.
\newblock Discrete stochastic calculus and its application to pricing a certain
  type of lookback options.
\newblock In \emph{Proceedings of the 45th ISCIE International Symposium on
  Stochastic Systems Theory and its Applications}, pages 113--119, 2014.

\bibitem[Fujita and Yor(2007)]{fujita2007remarkable}
Takahiko Fujita and Marc Yor.
\newblock On the remarkable distributions of maxima of some fragments of the
  standard reflecting random walk and brownian motion.
\newblock \emph{PROBABILITY AND MATHEMATICAL STATISTICS-WROCLAW UNIVERSITY},
  27\penalty0 (1):\penalty0 89, 2007.

\bibitem[Fujita et~al.(2014)Fujita, Kawanishi, and Yor]{fujita2014one}
Takahiko Fujita, Yasuhiro Kawanishi, and Marc Yor.
\newblock On the one-sided maximum of brownian and random walk fragments and
  its applications to new exotic options called “meander option”.
\newblock \emph{Pacific Journal of Mathematics for Industry}, 6\penalty0
  (1):\penalty0 1--7, 2014.

\bibitem[Hu and Cs{\'a}ki(2003)]{hu2003lengths}
Yueyun Hu and Endre Cs{\'a}ki.
\newblock Lengths and heights of random walk excursions.
\newblock \emph{Discrete Mathematics \& Theoretical Computer Science}, 2003.

\bibitem[Jacod and Protter(2004)]{jacod2004probability}
Jean Jacod and Philip Protter.
\newblock \emph{Probability essentials}.
\newblock Springer Science \& Business Media, 2004.

\bibitem[Kennedy(1976)]{kennedy1976some}
Douglas~P Kennedy.
\newblock Some martingales related to cumulative sum tests and single-server
  queues.
\newblock \emph{Stochastic processes and their applications}, 4\penalty0
  (3):\penalty0 261--269, 1976.

\bibitem[Miyazaki and Tanaka(1989)]{miyazaki1989theorem}
Hiroshi Miyazaki and Hiroshi Tanaka.
\newblock A theorem of {Pitman} type for simple random walks on $\mathbb{Z}^d$.
\newblock \emph{Tokyo J. Math}, 12\penalty0 (1):\penalty0 235--240, 1989.

\bibitem[Nguyen-Ngoc and Yor(2005)]{nguyen2005some}
Laurent Nguyen-Ngoc and Marc Yor.
\newblock Some martingales associated to reflected {L{\'e}vy} processes.
\newblock In \emph{S{\'e}minaire de probabilit{\'e}s XXXVIII}, pages 42--69.
  Springer, 2005.

\bibitem[Ob{\l}{\'o}j(2004)]{obloj2004skorokhod}
Jan Ob{\l}{\'o}j.
\newblock The skorokhod embedding problem and its offspring.
\newblock \emph{Probability Surveys}, 1:\penalty0 321--392, 2004.

\bibitem[Ob\l\'oj(2006)]{obloj2006complete}
Jan Ob\l\'oj.
\newblock A complete characterization of local martingales which are functions
  of {Brownian} motion and its maximum.
\newblock \emph{Bernoulli}, 12\penalty0 (6):\penalty0 955--969, 2006.

\bibitem[Ob\l\'oj and Yor(2006)]{obloj2006local}
Jan Ob\l\'oj and Marc Yor.
\newblock On local martingale and its supremum: harmonic functions and beyond.
\newblock In \emph{From stochastic calculus to mathematical finance}, pages
  517--533. Springer, 2006.

\bibitem[Pitman(1975)]{pitman1975one}
James~W Pitman.
\newblock One-dimensional {Brownian} motion and the three-dimensional {Bessel}
  process.
\newblock \emph{Advances in Applied Probability}, 7\penalty0 (3):\penalty0
  511--526, 1975.

\bibitem[Simons(1983)]{simons1983discrete}
Gordon Simons.
\newblock A discrete analogue and elementary derivation of ‘{L{\'e}vy's}
  equivalence’ for brownian motion.
\newblock \emph{Statistics \& Probability Letters}, 1\penalty0 (4):\penalty0
  203--206, 1983.

\bibitem[Skorokhod(1965)]{skorokhod1965studies}
Anatoliy~V Skorokhod.
\newblock \emph{Studies in the theory of random processes}.
\newblock Addison--Wesley Publishing Co., Inc., Reading, Mass, 1965.

\bibitem[Tanaka(1989)]{tanaka1989time}
Hiroshi Tanaka.
\newblock Time reversal of random walks in one-dimension.
\newblock \emph{Tokyo Journal of Mathematics}, 12\penalty0 (1):\penalty0
  159--174, 1989.

\end{thebibliography}
\bibliographystyle{plainnat}

\end{document}